\newtheorem{thm}{Theorem}[section]
\newtheorem{lem}[thm]{Lemma}
\begin{document}

\markboth{D.M. D\~{a}ianu}{Sections and Fr\'{e}chet Polynomials}

\title{\bf SECTION METHOD AND FR\'{E}CHET POLYNOMIALS}

\author{Dan M. D\~{A}IANU\\
 University "Politehnica" of Timisoara}

\thispagestyle{plain}

\maketitle

%\vspace{-8cm}

\textbf{\textit{Tribute to Professor Borislav Crstici on
the 100th anniversary of his birth}}

\begin{abstract}

Using the section method we characterize the solutions $%
f:U\rightarrow Y$ of the following four equations %
\begin{equation*}
\sum\limits_{i=0}^{n}\left( -1\right) ^{n-i}\tbinom{n}{i}f\left( \sqrt[m]{%
u^{m}+iv^{m}}\right) =\left( n!\right) f\left( v\right) \text{, }
\end{equation*}%
\begin{equation*}
\text{\ }f\left( u\right) +\sum\limits_{i=1}^{n+1}\left( -1\right) ^{i}%
\tbinom{n+1}{i}f\left( \sqrt[m]{u^{m}+iv^{m}}\right) =0,
\end{equation*}%
\begin{equation*}
\sum\limits_{i=0}^{n}\left( -1\right) ^{n-i}\tbinom{n}{i}f\left( \arcsin
\left\vert \sin u\sin ^{i}v\right\vert \right) =\left( n!\right) f\left(
v\right) \text{ and }
\end{equation*}%
\begin{equation*}
f\left( u\right) +\sum\limits_{i=1}^{n+1}\left( -1\right) ^{i}\tbinom{n+1}{i%
}f\left( \arcsin \left\vert \sin u\sin ^{i}v\right\vert \right) =0,
\end{equation*}%
 where $m\geq 2$ \ and $n$\ are\ positive integers,$%
\ U\subseteq 
%TCIMACRO{\U{211d} }%
%BeginExpansion
\mathbb{R}
%EndExpansion
$ \ is a maximally relevant real domain and $\left( Y,+\right) $%
 \ is an $\left( n!\right) $ -divisible Abelian group.\footnote{Mathematical Subject 
Classification(2010):{\it 39B52}, {\it 39A70}

Keywords and phrases:{\it monomial, Fr\'{e}chet polynomial, section method.} }

\end{abstract} 

\section{Introduction}

Educated at the Cluj school of functional equations, approximations and
convexity founded by Academician Tiberiu Popoviciu, Professor Borislav Crstici had
among his main concerns the functional characterization of polynomials and
their generalizations. Without pretending exhaustiveness, we mention here
Professor's thesis dedicated to the functional equations that define polynomials 
\cite{bc69} and the additions made in \cite{kc89}, \cite{nc87} and \cite{bc96}. A brief
presentation of Professor's personality is given in \cite{cd14}. In this context we mention the works
\cite{aak}, \cite{al17}, \cite{al23}, \cite{as15}, \cite{as16}, 
\cite{d14}, \cite{dd14}, \cite{dd18}, \cite{d19}, \cite{dm19}, \cite{sz14}
and \cite{tst22}, published only in the last decade and which contain some generalizations and analyses of different types of polynomials.

Let $m,n$ be positive integers, $m\geq 2$ and$\ \left( Y,+\right) $\ be an $%
\left( n!\right) $-divisible Abelian group - i.e. the group homomorphism $%
Y\rightarrow Y$, $y\mapsto \left( n!\right) y$ is an isomorphism. This paper
is dedicated to characterize the solutions $f:%
%TCIMACRO{\U{211d} }%
%BeginExpansion
\mathbb{R}
%EndExpansion
\rightarrow Y$\ of the equations 
\begin{equation}
\sum\limits_{i=0}^{n}\left( -1\right) ^{n-i}\tbinom{n}{i}f\left( \sqrt[m]{%
u^{m}+iv^{m}}\right) =\left( n!\right) f\left( v\right) \text{ for all }%
u,v\in 
%TCIMACRO{\U{211d} }%
%BeginExpansion
\mathbb{R}
%EndExpansion
\text{,}  \label{1}
\end{equation}%
\begin{equation}
f\left( u\right) +\sum\limits_{i=1}^{n+1}\left( -1\right) ^{i}\tbinom{n+1}{i%
}f\left( \sqrt[m]{u^{m}+iv^{m}}\right) =0\text{ for all }u,v\in 
%TCIMACRO{\U{211d} }%
%BeginExpansion
\mathbb{R}
%EndExpansion
\text{,}  \label{2}
\end{equation}%
and the solutions $f:U:=%
%TCIMACRO{\U{211d} }%
%BeginExpansion
\mathbb{R}
%EndExpansion
\diagdown \left\{ k\pi |\text{ }k\in 
%TCIMACRO{\U{2124} }%
%BeginExpansion
\mathbb{Z}
%EndExpansion
\right\} \rightarrow Y$\ of the equations%
\begin{equation}
\sum\limits_{i=0}^{n}\left( -1\right) ^{n-i}\tbinom{n}{i}f\left( \arcsin
\left\vert \sin u\sin ^{i}v\right\vert \right) =\left( n!\right) f\left(
v\right) \text{ for all }u,v\in U\text{, }  \label{3}
\end{equation}
\begin{equation}
f\left( u\right) +\sum\limits_{i=1}^{n+1}\left( -1\right) ^{i}\tbinom{n+1}{i%
}f\left( \arcsin \left\vert \sin u\sin ^{i}v\right\vert \right) =0\text{ for
all }u,v\in U\text{.}  \label{4}
\end{equation}%
The tool use for these characterizations is the section method \cite{dm20}, 
\cite{dd22}.

\section{Framework}

Everywhere in what follows $\left( X,+\right) $\ is a commutative semigroup, 
$n$\ is a positive integer and $\left( Y,+\right) $\ is an $\left( n!\right) $%
-divisible Abelian group. We denote by $\mathcal{S}_{i}$\ the set of the
solutions of equation $\left( i\right) $; for instance $\mathcal{S}_{\ref{1}}
$\ is the set of all functions $f:%
%TCIMACRO{\U{211d} }%
%BeginExpansion
\mathbb{R}
%EndExpansion
\rightarrow Y$\ that satisfy equation (\ref{1}). Let $j$\ be a nonnegative
integer; we will use the operator%
\begin{equation*}
\Delta _{y}^{j}:Y^{X}\rightarrow Y^{X}\text{, }\Delta _{y}^{j}\rho \left(
x\right) :=\sum\limits_{i=0}^{j}\left( -1\right) ^{j-i}\tbinom{j}{i}\rho
\left( x+iy\right) 
\end{equation*}%
for $y\in X$; $\mathcal{M}_{j}\left( X,Y\right) $ denotes the $j$-\textit{%
monomials}, i.e.\ the solutions $\rho :X\rightarrow Y$\ of the equation%
\begin{equation*}
\Delta _{y}^{j}\rho \left( x\right) =\left( j!\right) \rho \left( y\right) 
\text{ for all }x,y\in X
\end{equation*}%
and $\mathcal{P}_{n}\left( X,Y\right) $\ denotes the (\textit{Fr\'{e}chet})%
\textit{\ }$n$\textit{-polynomials}, i.e. the solutions $\rho :X\rightarrow Y
$\ of the equation%
\begin{equation*}
\Delta _{y}^{n+1}\rho \left( x\right) =0\text{ for all }x,y\in X.
\end{equation*}%
The first characterization of continuous real $n$-polynomials by this
equation was realized by Fr\'{e}chet in \cite{f09}. A detailed analysis of Fr%
\'{e}chet polynomials in the present framework was given by Djokovi\'{c} in 
\cite{dj69}; from this paper we will use only the following result.

\begin{lem}
Let $\rho :X\rightarrow Y.$ Then $\rho \in \mathcal{P}_{n}\left( X,Y\right) $%
\ if and only if there exists $\rho _{j}\in \mathcal{M}_{j}\left( X,Y\right) 
$ for all $j\in \left\{ 0,1,...,n\right\} $\ such that $\rho
=\sum\limits_{j=0}^{n}\rho _{j}$.
\end{lem}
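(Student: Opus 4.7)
The plan is to prove both directions separately, with the forward direction handled by induction on $n$.

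For the easy direction (sufficiency): if $\rho_j \in \mathcal{M}_j(X,Y)$ for each $j = 0, 1, \ldots, n$, then each satisfies $\Delta_y^j \rho_j(x) = (j!)\rho_j(y)$, which is independent of $x$; applying one further $\Delta_y$ therefore yields $\Delta_y^{j+1}\rho_j = 0$, and a fortiori $\Delta_y^{n+1}\rho_j = 0$ whenever $j \leq n$. Summing and using linearity of $\Delta_y^{n+1}$ gives $\rho = \sum_{j=0}^n \rho_j \in \mathcal{P}_n(X,Y)$.

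For the harder direction (necessity) I would induct on $n$. The base case $n = 0$ is immediate: $\Delta_y \rho = 0$ reads $\rho(x+y) = \rho(x)$, which is exactly the $0$-monomial condition. For the inductive step, assume the statement holds for $n-1$ and let $\rho \in \mathcal{P}_n(X,Y)$. The strategy is to peel off a top-degree summand $\rho_n \in \mathcal{M}_n(X,Y)$ such that $\rho - \rho_n \in \mathcal{P}_{n-1}(X,Y)$; the inductive hypothesis applied to $\rho - \rho_n$ then supplies the remaining $\rho_0, \ldots, \rho_{n-1}$.

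The construction of $\rho_n$ rests on a polarization identity: from the diagonal vanishing $\Delta_y^{n+1}\rho(x) = 0$ one first derives the fully mixed vanishing $\Delta_{y_1}\Delta_{y_2}\cdots\Delta_{y_{n+1}}\rho(x) = 0$ for arbitrary $y_1, \ldots, y_{n+1} \in X$. Specializing $y_2 = \cdots = y_{n+1} = y$ and letting $y_1$ vary shows $\Delta_{y_1}\Delta_y^n\rho(x) = 0$, i.e.\ $\Delta_y^n\rho(x)$ is invariant under every translation in the $x$-argument, so it depends only on $y$; by $(n!)$-divisibility of $Y$ I may define $\rho_n(y) := (1/n!)\Delta_y^n\rho(x)$ for any $x$. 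A symmetric specialization of the mixed-difference identity in the other variables then yields $\Delta_z^n\rho_n(y) = (n!)\rho_n(z)$, so $\rho_n \in \mathcal{M}_n(X,Y)$. Finally $\Delta_y^n(\rho - \rho_n)(x) = \Delta_y^n\rho(x) - (n!)\rho_n(y) = 0$, placing $\rho - \rho_n$ in $\mathcal{P}_{n-1}(X,Y)$ and closing the induction.

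The main obstacle is the polarization step together with the verification that the candidate $\rho_n$ is itself a monomial, carried out in a commutative \emph{semigroup} where cancellation and inverses are unavailable. The standard route is an auxiliary induction on the number of distinct translation arguments, exploiting the commutativity $\Delta_{y_i}\Delta_{y_j} = \Delta_{y_j}\Delta_{y_i}$ and the symmetrization identities developed in Djokovi\'{c}'s paper \cite{dj69}; both specializations of the resulting mixed-difference vanishing are the substantive ingredients of the argument.
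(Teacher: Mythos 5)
The paper never proves this lemma: it is imported wholesale from Djokovi\'{c} \cite{dj69}, so there is no in-paper argument to compare against, and your sketch is essentially the standard proof of that representation theorem. The sufficiency half is complete as written. For necessity, the peeling-off step $\rho_{n}(y):=(1/n!)\,\Delta_{y}^{n}\rho(x)$ and the verification $\Delta_{y}^{n}(\rho-\rho_{n})(x)=\Delta_{y}^{n}\rho(x)-(n!)\rho_{n}(y)=0$ are exactly right, and two small points you gloss over do work out: translation-invariance of $x\mapsto\Delta_{y}^{n}\rho(x)$ gives genuine constancy because $X$ is commutative (as in your base case, $h(x+y_{1})=h(x)$ and $h(y_{1}+x)=h(y_{1})$ force $h(x)=h(y_{1})$ even though $X$ has no identity), and $(n!)$-divisibility of $Y$ does descend to $(k!)$-divisibility for $k\le n$, which the lower stages of your induction need. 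The one place where your wording undersells the work is the claim that a ``symmetric specialization'' yields $\Delta_{z}^{n}\rho_{n}(y)=(n!)\rho_{n}(z)$: no specialization of the mixed-difference identity gives this directly. You must first show that $A(y_{1},\dots,y_{n}):=\Delta_{y_{1}}\cdots\Delta_{y_{n}}\rho(x)$ is base-point independent and additive in each increment variable (via $\Delta_{y+y'}f(x)=\Delta_{y}f(x+y')+\Delta_{y'}f(x)$), and then compute $\Delta_{z}^{n}$ of its diagonal using $\sum_{i=0}^{n}(-1)^{n-i}\binom{n}{i}i^{k}=n!\,\delta_{kn}$ for $0\le k\le n$. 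That multi-additivity argument, together with the polarization step (equal-increment vanishing implies distinct-increment vanishing), is precisely the substance of \cite{dj69}; deferring those two sub-steps to that reference is no worse than what the paper itself does, but a self-contained proof would have to write them out.
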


The section method \cite{dm20}, \cite{dd22} provides, among other things, a
technique for solving - partial or total - some equations whose solutions
are composite functions. We will give below only a few rudiments of this
method adapted to our goals.

Let $g:U\rightarrow X$\ be a surjection and $g^{\prime }:X\rightarrow U$\ be
a section of $g$ (i.e. $g\circ g^{\prime }=id_{X}$). Let also the functions%
\begin{equation*}
G:Y^{X}\times X^{2}\rightarrow Y\text{, }H:Y\times Y\rightarrow Y
\end{equation*}%
and the equation%
\begin{equation}
G\left( f\circ g^{\prime },\left( g\left( u\right) ,g\left( v\right) \right)
\right) =H\left( f\left( u\right) ,f\left( v\right) \right) \text{ for all }%
u,v\in U,  \label{5}
\end{equation}%
where the unknown is $f:U\rightarrow Y$. The equation%
\begin{equation}
G\left( \rho ,\left( x,y\right) \right) =H\left( \rho \left( x\right) ,\rho
\left( y\right) \right) \text{ for all }x,y\in X,  \label{6}
\end{equation}%
where $\rho :X\rightarrow Y$ is the unknown, is \textit{the characteristic}
of equation (\ref{5}).

We will use the following results extracted from Th. 2.4.1, Th 2.4.2 and Th.
2.6.6 in \cite{dm20}.

\begin{lem}
1. $\left\{ f\circ g^{\prime }|f\in \mathcal{S}_{5}\right\} \subseteq 
\mathcal{S}_{6}.$

2. $\mathcal{S}_{5}^{c}:=\left\{ \rho \circ g|\rho \in \mathcal{S}%
_{6}\right\} \subseteq \mathcal{S}_{5}.$

3. If $f\in \mathcal{S}_{5}$\ and $u_{0}\in U$\ such that the function%
\begin{equation*}
f\left( U\right) \rightarrow Y\text{, }y\mapsto H\left( f\left( u_{0}\right)
,y\right) \text{ (or }y\mapsto H\left( y,f\left( u_{0}\right) \right) \text{)%
}
\end{equation*}%
is one-to-one, then $f\in \mathcal{S}_{5}^{c}$.
\end{lem}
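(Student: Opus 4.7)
The plan is to treat the three parts in turn, using only the defining identity $g\circ g'=\mathrm{id}_X$ and the given injectivity hypothesis.

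\textbf{Part 1.} Given $f\in\mathcal S_5$, set $\rho:=f\circ g'$ and verify (\ref{6}). Fix $x,y\in X$. Since $g$ is surjective with section $g'$, the points $u:=g'(x)$ and $v:=g'(y)$ lie in $U$ and satisfy $g(u)=x$, $g(v)=y$. Substituting this pair $(u,v)$ into (\ref{5}) gives
\[
G\bigl(f\circ g',(x,y)\bigr)=H\bigl(f(g'(x)),f(g'(y))\bigr)=H\bigl(\rho(x),\rho(y)\bigr),
\]
which is exactly (\ref{6}).

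\textbf{Part 2.} Given $\rho\in\mathcal S_6$, set $f:=\rho\circ g$ and verify (\ref{5}). The key observation is the computation $f\circ g'=\rho\circ g\circ g'=\rho\circ\mathrm{id}_X=\rho$. Therefore, for any $u,v\in U$, using (\ref{6}) at the points $x=g(u)$, $y=g(v)$,
\[
G\bigl(f\circ g',(g(u),g(v))\bigr)=G\bigl(\rho,(g(u),g(v))\bigr)=H\bigl(\rho(g(u)),\rho(g(v))\bigr)=H(f(u),f(v)),
\]
which is (\ref{5}).

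\textbf{Part 3.} This is the only step with real content. The natural candidate for a $\rho\in\mathcal S_6$ with $f=\rho\circ g$ is $\rho:=f\circ g'$, which by Part 1 already lies in $\mathcal S_6$; what must be shown is the pointwise identity $f(u)=f(g'(g(u)))$ for every $u\in U$. Fix $u\in U$ and put $u_1:=g'(g(u))$; then $g(u_1)=g(g'(g(u)))=g(u)$. Apply equation (\ref{5}) with the pair $(u_0,u)$ and with the pair $(u_0,u_1)$: since the left-hand side of (\ref{5}) depends on $u$ and $v$ only through $(g(u),g(v))$, and $g(u)=g(u_1)$, the two left-hand sides coincide, giving
\[
H(f(u_0),f(u))=H(f(u_0),f(u_1)).
\]
Injectivity of $y\mapsto H(f(u_0),y)$ on $f(U)$ forces $f(u)=f(u_1)$, as desired. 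The parenthetical alternative is handled symmetrically by substituting the pairs $(u,u_0)$ and $(u_1,u_0)$ into (\ref{5}) and invoking injectivity of $y\mapsto H(y,f(u_0))$.

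The only mild obstacle is Part 3, where one must notice that $g'\circ g$ need not be the identity on $U$ (so $f$ need not literally factor through $g$ a priori), and that the cancellation trick via injectivity of a single slice of $H$ is precisely what converts $\mathcal S_5\supseteq\mathcal S_5^c$ into an equality under the stated hypothesis.
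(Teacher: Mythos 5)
Your proof is correct: all three parts are verified cleanly, and in Part 3 you correctly identify the one substantive point — that $g'\circ g$ need not be the identity on $U$, so one must use the injectivity of the slice of $H$ to force $f(u)=f(g'(g(u)))$. The paper itself gives no proof of this lemma (it is quoted from Theorems 2.4.1, 2.4.2 and 2.6.6 of the cited reference \cite{dm20}), and your argument is the standard direct verification one would expect there.
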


The functions in $\mathcal{S}_{5}^{c}$ are named \textit{canonical solutions 
}(of equation (\ref{5})). Thus Lemma 2.2.2 gives a partial solution for
equation (\ref{5}) and Lemma 2.2.3 provides sufficient conditions under which
a solution of equation (\ref{5}) is a canonical solution.

In the following we will use the notions and the conventions introduced
above.

\section{Radical-Fr\'{e}chet equations}

Let $m\geq 2$\ be an integer, $U=\mathbb{R} $ and $\left( X,+\right) $\ be the additive semigroup
defined by%
\begin{equation*}
\left( X,+\right) :=\left\{ 
\begin{array}{c}
\left( 
%TCIMACRO{\U{211d} }%
%BeginExpansion
\mathbb{R}
%EndExpansion
,+\right) \text{ if }m\text{\ is odd} \\ 
\left( 
%TCIMACRO{\U{211d} }%
%BeginExpansion
\mathbb{R}
%EndExpansion
_{+},+\right) \text{ if }m\text{\ is even}%
\end{array}%
,\right. 
\end{equation*}%
where $%
%TCIMACRO{\U{211d} }%
%BeginExpansion
\mathbb{R}
%EndExpansion
_{+}:=[0,\infty ).$

First we characterize the solutions of the \textit{radical-monomial }%
equation (\ref{1}).

\begin{thm}
Let $f:%
%TCIMACRO{\U{211d} }%
%BeginExpansion
\mathbb{R}
%EndExpansion
\rightarrow Y$\ be a function. Then $f$\ is a solution of equation (\ref{1})
if and only if there exists an $n$-monomial $\rho \in \mathcal{M}_{n}\left(
X,Y\right) $\ such that%
\begin{equation}
f\left( u\right) =\rho \left( u^{m}\right) \text{ for all }u\in 
%TCIMACRO{\U{211d} }%
%BeginExpansion
\mathbb{R}
%EndExpansion
\text{.}  \label{7}
\end{equation}
\end{thm}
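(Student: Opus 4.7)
The plan is to recognise equation (\ref{1}) as a special instance of the abstract equation (\ref{5}) and then let Lemma 2.2 supply the rest. Set $g\colon\mathbb{R}\to X$, $g(u):=u^{m}$; this is surjective for both parities of $m$ and admits the section $g^{\prime}(x):=\sqrt[m]{x}$ (the real $m$-th root, taken nonnegative when $m$ is even). With
\begin{equation*}
G(\rho,(x,y)):=\Delta_{y}^{n}\rho(x)\qquad\text{and}\qquad H(a,b):=(n!)\,b,
\end{equation*}
equation (\ref{1}) rewrites as $G(f\circ g^{\prime},(g(u),g(v)))=H(f(u),f(v))$, and its characteristic (\ref{6}) reads $\Delta_{y}^{n}\rho(x)=(n!)\rho(y)$, which is precisely the defining equation of $\mathcal{M}_{n}(X,Y)$.

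For the ``if'' direction I would invoke Lemma 2.2.2: every $\rho\in\mathcal{M}_{n}(X,Y)=\mathcal{S}_{\ref{6}}$ yields a canonical solution $\rho\circ g\in\mathcal{S}_{\ref{1}}$, which is the statement that $u\mapsto\rho(u^{m})$ satisfies (\ref{1}).

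For the ``only if'' direction I would invoke Lemma 2.2.3 at an arbitrary $u_{0}\in\mathbb{R}$. The map $y\mapsto H(f(u_{0}),y)=(n!)\,y$ is a group automorphism of $Y$ by the $(n!)$-divisibility hypothesis, hence one-to-one on $f(\mathbb{R})$. Consequently $f\in\mathcal{S}_{\ref{1}}^{c}$, so $f=\rho\circ g$ for some $\rho\in\mathcal{M}_{n}(X,Y)$; explicitly one may take $\rho=f\circ g^{\prime}$, which belongs to $\mathcal{M}_{n}(X,Y)$ by Lemma 2.2.1. This is exactly (\ref{7}).

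The only point that looks delicate is the case of even $m$, where the representation (\ref{7}) silently requires $f$ to be even. I do not expect this to be a genuine obstacle: the factorisation through $g$ asserted by Lemma 2.2.3 already delivers the evenness as a by-product of canonicality, so the section method packages this together with the reduction to the characteristic rather than forcing a separate verification.
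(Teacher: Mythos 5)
Your proof is correct and follows essentially the same route as the paper: the same choice of $g$, $g^{\prime}$, $G$ and $H$, Lemma 2.2.2 for sufficiency, and Lemma 2.2.3 (via the injectivity of $y\mapsto (n!)y$ granted by $(n!)$-divisibility) for necessity. Your closing remark on the even-$m$ case is also right — canonicality $f=\rho\circ g$ automatically forces $f$ to be even there, so no separate check is needed.
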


\begin{proof}
We apply the section method for the surjection%
\begin{equation*}
g:%
%TCIMACRO{\U{211d} }%
%BeginExpansion
\mathbb{R}
%EndExpansion
\rightarrow X\text{, }u\mapsto u^{m},
\end{equation*}%
its section 
\begin{equation*}
g^{\prime }:X\rightarrow 
%TCIMACRO{\U{211d} }%
%BeginExpansion
\mathbb{R}
%EndExpansion
\text{, }x\mapsto \sqrt[m]{x},
\end{equation*}%
and the functions%
\begin{equation*}
G:Y^{X}\times X^{2}\rightarrow Y\text{, }G\left( \rho ,\left( x,y\right)
\right) :=\Delta _{y}^{n}\rho \left( x\right) ,
\end{equation*}%
\begin{equation*}
H:Y\times Y\rightarrow Y\text{, \ }H\left( y_{1},y_{2}\right) :=\left(
n!\right) y_{2}.
\end{equation*}%
Then equation (\ref{1}) becomes equation (\ref{5})\
and its characteristic is equation (\ref{6}). %
Therefore the characteristic of equation (\ref{1}) is exactly the $n$%
-monomial equation%
\begin{equation*}
\Delta _{y}^{n}\rho \left( x\right) =\left( n!\right) \rho \left( y\right) 
\text{ for all }x,y\in X
\end{equation*}%
and all its solutions are in $\mathcal{M}_{n}\left( X,Y\right) .$

1. Let $\rho \in \mathcal{M}_{n}\left( X,Y\right) $\ and $f:%
%TCIMACRO{\U{211d} }%
%BeginExpansion
\mathbb{R}
%EndExpansion
\rightarrow Y$\ defined by (\ref{7}). Then $f=\rho \circ g$\ and, according
to Lemma 2.2.2, $f$\ is a solution of equation (\ref{1}).

2. For proving the converse it suffices to show that $\mathcal{S}%
_{1}\subseteq \mathcal{S}_{1}^{c}$. Let $f$\ be a solution of equation (\ref%
{1}). Since $\left( Y,+\right) $ is $\left( n!\right) $-divisible, the
function%
\begin{equation*}
f\left( 
%TCIMACRO{\U{211d} }%
%BeginExpansion
\mathbb{R}
%EndExpansion
\right) \rightarrow Y\text{, }y\mapsto H\left( 0,y\right) =\left( n!\right) y
\end{equation*}%
is injective. According to Lemma 2.2.3, $f$\ is a canonical solution of
equation (\ref{1}), i.e. there exists $\rho \in \mathcal{M}_{n}\left(
X,Y\right) $\ such that $f\left( u\right) =\rho \left( u^{m}\right) $ for
all $u\in 
%TCIMACRO{\U{211d} }%
%BeginExpansion
\mathbb{R}
%EndExpansion
$.\bigskip 
\end{proof}

Now we are in position to characterize the solutions of the \textit{radical-Fr\'{e}chet} equation (\ref{2}).

\begin{thm}
Let $f:%
%TCIMACRO{\U{211d} }%
%BeginExpansion
\mathbb{R}
%EndExpansion
\rightarrow Y$\ be a function. Then $f$\ is a solution of equation (\ref{2})
if and only if there exists $\rho _{i}\in \mathcal{M}_{i}\left( X,Y\right) $%
\ for all $i\in \left\{ 0,1,...,n\right\} $\ such that%
\begin{equation}
f\left( u\right) =\rho _{0}\left( u^{m}\right) +\rho _{1}\left( u^{m}\right)
+\cdots +\rho _{n}\left( u^{m}\right) \text{ for all }u\in 
%TCIMACRO{\U{211d} }%
%BeginExpansion
\mathbb{R}
%EndExpansion
\text{.}  \label{8}
\end{equation}
\end{thm}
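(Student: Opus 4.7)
The plan is to apply the section method exactly as in the proof of Theorem 3.1, keeping $g(u) = u^m$, $g'(x) = \sqrt[m]{x}$, but now replacing the characteristic data by $G(\rho,(x,y)) := \Delta_y^{n+1}\rho(x)$ and $H(y_1,y_2) := 0$. Under this choice, equation (\ref{2}) is an instance of (\ref{5}), and its characteristic (\ref{6}) is precisely the Fr\'echet equation $\Delta_y^{n+1}\rho(x) = 0$ on $X$, i.e.\ $\rho \in \mathcal{P}_n(X,Y)$. Lemma 2.1 then identifies the latter set with the collection of sums $\rho_0 + \rho_1 + \cdots + \rho_n$ where $\rho_j \in \mathcal{M}_j(X,Y)$.

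The sufficiency direction will be immediate: given monomials $\rho_0,\ldots,\rho_n$, the sum $\rho := \sum_{j=0}^n \rho_j$ lies in $\mathcal{P}_n(X,Y)$, so $f := \rho \circ g$ --- which is exactly the function in (\ref{8}) --- belongs to $\mathcal{S}_{\ref{2}}$ by Lemma 2.2.2.

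For the converse the obstacle is that $H \equiv 0$ fails the injectivity hypothesis of Lemma 2.2.3, and so the inclusion $\mathcal{S}_{\ref{2}} \subseteq \mathcal{S}_{\ref{2}}^c$ must be obtained by hand. I would proceed as follows. If $m$ is odd, $g : \mathbb{R} \to X = \mathbb{R}$ is bijective and $\rho := f \circ g^{-1}$ immediately yields $f = \rho \circ g$. If $m$ is even, I would first show that $f$ is even: writing (\ref{2}) at $(u,v)$ and at $(-u,v)$ produces identical sums $\sum_{i=1}^{n+1}(-1)^i\binom{n+1}{i}f(\sqrt[m]{u^m + iv^m})$, since $(-u)^m = u^m$, so $f(u) = f(-u)$; then $\rho(x) := f(\sqrt[m]{x})$ is well defined on $X = \mathbb{R}_+$ and satisfies $f = \rho \circ g$. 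In either case, substituting $f = \rho \circ g$ into (\ref{2}) collapses it to $\Delta_{v^m}^{n+1}\rho(u^m) = 0$ for all $u,v \in \mathbb{R}$, hence to the characteristic equation $\Delta_y^{n+1}\rho(x) = 0$ on $X$; so $\rho \in \mathcal{P}_n(X,Y)$, and Lemma 2.1 now supplies the desired decomposition (\ref{8}).

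The main obstacle is precisely this parity argument in the even-$m$ case: because $H$ is identically zero, the general one-to-one criterion provided by Lemma 2.2.3 is unavailable, and producing a $\rho$ with $f = \rho \circ g$ requires leaving the abstract template and exploiting the specific symmetry that the composite arguments $\sqrt[m]{u^m + iv^m}$ depend on $u$ only through $u^m$.
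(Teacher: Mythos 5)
Your proof is correct, but the necessity direction takes a genuinely different route from the paper's. The paper never faces the obstacle you describe: instead of taking $H\equiv 0$, it splits the equation as $\sum_{i=1}^{n+1}(-1)^{i}\tbinom{n+1}{i}f\left(\sqrt[m]{u^{m}+iv^{m}}\right)=-f\left(u\right)$, i.e.\ it sets $G\left(\rho,\left(x,y\right)\right):=\sum_{i=1}^{n+1}(-1)^{i}\tbinom{n+1}{i}\rho\left(x+iy\right)$ and $H\left(y_{1},y_{2}\right):=-y_{1}$. Then $y\mapsto H\left(y,f\left(u_{0}\right)\right)=-y$ is injective, Lemma 2.2.3 applies verbatim, and $\mathcal{S}_{2}\subseteq\mathcal{S}_{2}^{c}$ follows with no case analysis. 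Your workaround --- splitting into $m$ odd/even, deducing that $f$ is even from $(-u)^{m}=u^{m}$, defining $\rho\left(x\right):=f\left(\sqrt[m]{x}\right)$ by hand, and pushing equation (\ref{2}) down to $\Delta_{y}^{n+1}\rho\left(x\right)=0$ via the surjectivity of $u\mapsto u^{m}$ onto $X$ --- is sound and essentially re-proves the relevant instance of Lemma 2.2.3 concretely; it has the merit of making explicit that solutions are even when $m$ is even, which the paper leaves implicit. Two small caveats: with your choice $H\equiv 0$ and $G=\Delta_{y}^{n+1}$, equation (\ref{2}) is not \emph{literally} an instance of (\ref{5}) for even $m$, since the $i=0$ term of $\Delta_{g\left(v\right)}^{n+1}\left(f\circ g^{\prime}\right)\left(g\left(u\right)\right)$ is $\left(-1\right)^{n+1}f\left(\left\vert u\right\vert\right)$ rather than $\left(-1\right)^{n+1}f\left(u\right)$ (this is harmless only after you have established evenness, and is automatic in the sufficiency direction because $\rho\circ g$ is even); and the lesson to draw is that the section method leaves you free to choose the split between $G$ and $H$, so one should choose it to make $H$ injective in one slot rather than abandon Lemma 2.2.3.
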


\begin{proof}
As in the proof of the previous theorem, let%
\begin{equation*}
g:%
%TCIMACRO{\U{211d} }%
%BeginExpansion
\mathbb{R}
%EndExpansion
\rightarrow X\text{, }g\left( u\right) :=u^{m}\text{ and }g^{\prime
}:X\rightarrow 
%TCIMACRO{\U{211d} }%
%BeginExpansion
\mathbb{R}
%EndExpansion
\text{, }g^{\prime
}\left( x\right) :=\sqrt[m]{x}.
\end{equation*}%
Let also be the functions%
\begin{equation*}
G:Y^{X}\times X^{2}\rightarrow Y\text{, }G\left( \rho ,\left( x,y\right)
\right) :=\sum\limits_{i=1}^{n+1}\left( -1\right) ^{i}\tbinom{n+1}{i}\rho
\left( x+iy\right) \text{ and}
\end{equation*}%
\begin{equation*}
H:Y\times Y\rightarrow Y\text{, \ }H\left( y_{1},y_{2}\right) :=-y_{1}.
\end{equation*}%
Then equation (\ref{2}) can be written in the form (\ref{5}) and its characteristic is (\ref{6}) or, equivalent, the Fr\'{e}chet polynomial equation\ 
\begin{equation*}
\Delta _{y}^{n+1}\rho \left( x\right) =0\text{ for all }x,y\in X.
\end{equation*}%
We note that the solutions of the last equation are given by $\mathcal{P}%
_{n}\left( X,Y\right) $ and their characterization is given by Lemma 2.1.

1. Let $\rho _{i}\in \mathcal{M}_{i}\left( X,Y\right) $\ for all $i\in
\left\{ 0,1,...,n\right\} $\ and $f:%
%TCIMACRO{\U{211d} }%
%BeginExpansion
\mathbb{R}
%EndExpansion
\rightarrow Y$\ defined by (\ref{8}). Then $\rho
:=\sum\limits_{i=0}^{n}\rho _{i}\in \mathcal{P}_{n}\left( X,Y\right) $\ (by
Lemma 2.1) and $f=\rho \circ g\in \mathcal{S}_{2}$\ (by Lemma 2.2.2).

2. Let $f\in \mathcal{S}_{2}$. To show that $f$\ can be expressed by (\ref{8}%
) with $\rho _{i}\in \mathcal{M}_{i}\left( X,Y\right) $ - or, equivalent,
that $f=\rho \circ g$, where $\rho :=\sum\limits_{i=0}^{n}\rho _{i}\in 
\mathcal{P}_{n}\left( X,Y\right) $ -, it is sufficient to prove that $f$\ is
a canonical solution of equation (\ref{2}). But the function%
\begin{equation*}
f\left( 
%TCIMACRO{\U{211d} }%
%BeginExpansion
\mathbb{R}
%EndExpansion
\right) \rightarrow Y\text{, }y\mapsto H\left( y,0\right) =-y
\end{equation*}%
is injective; according to Lemma 2.2.3, $f\in \mathcal{S}_{2}^{c}$, and the
theorem is completely proved.
\end{proof}

\section{Arcsine-Fr\'{e}chet equations}

Before proceeding to the characterizations of the solutions of the \textit{arcsine-Fr%
\'{e}chet} equations (\ref{3}) and (\ref{4}), let us note that $U:=%
%TCIMACRO{\U{211d} }%
%BeginExpansion
\mathbb{R}
%EndExpansion
\diagdown \left\{ k\pi |k\in 
%TCIMACRO{\U{2124} }%
%BeginExpansion
\mathbb{Z}
%EndExpansion
\right\} $\ is the maximal domain on which these equations has nontrivial
solutions; indeed if there is $k\in 
%TCIMACRO{\U{2124} }%
%BeginExpansion
\mathbb{Z}
%EndExpansion
$\ such that $k\pi $\ is in the domain, for $u=k\pi $\ in (\ref{3}) we get $%
0=\left( n!\right) f\left( v\right) $, and, since $\left( Y,+\right) $\ is $%
\left( n!\right) $-divisible we immediately obtain $f=0$, i.e. $\mathcal{S}%
_{3}=\left\{ 0\right\} $; analogously, for $v=k\pi $\ in (\ref{4}) we get $%
f=0$ and $\mathcal{S}_{4}=\left\{ 0\right\} $.

In the following lines, the set $X:=(-\infty ,0]$ is endowed with the
addition of real numbers, hence $\left( X,+\right) $ is an Abelian semigroup.

\begin{thm}
Let $f:U:=%
%TCIMACRO{\U{211d} }%
%BeginExpansion
\mathbb{R}
%EndExpansion
\diagdown \left\{ k\pi |k\in 
%TCIMACRO{\U{2124} }%
%BeginExpansion
\mathbb{Z}
%EndExpansion
\right\} \rightarrow Y$. Then $f\in \mathcal{S}_{3}$\ if and only if there
exists an $n$-monomial $\rho \in \mathcal{M}_{n}\left( X,Y\right) $\ and%
\begin{equation}
f\left( u\right) =\rho \left( \ln \left\vert \sin u\right\vert \right) \text{
for all }u\in U\text{.}  \label{9}
\end{equation}
\end{thm}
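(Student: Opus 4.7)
The plan is to apply the section method exactly as in Theorem 3.1, but with a surjection tailored to the arcsine-monomial structure. Take $g:U\to X$, $g(u):=\ln|\sin u|$. Since $|\sin u|\in(0,1]$ for $u\in U$, $g$ is surjective onto $X=(-\infty,0]$. A natural section is
\begin{equation*}
g':X\to U,\qquad g'(x):=\arcsin(e^{x}),
\end{equation*}
which maps $(-\infty,0]$ into $(0,\pi/2]\subset U$, and satisfies $g(g'(x))=\ln(e^{x})=x$.

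The crucial algebraic observation is that, for $u,v\in U$, with $x=g(u)$ and $y=g(v)$,
\begin{equation*}
\arcsin\bigl|\sin u\,\sin^{i}v\bigr|=\arcsin\bigl(e^{x}\cdot e^{iy}\bigr)=g'(x+iy),
\end{equation*}
so that $f(\arcsin|\sin u\sin^{i}v|)=(f\circ g')(g(u)+i\,g(v))$. Consequently, setting
\begin{equation*}
G:Y^{X}\times X^{2}\to Y,\quad G(\rho,(x,y)):=\Delta_{y}^{n}\rho(x),\qquad H(y_{1},y_{2}):=(n!)\,y_{2},
\end{equation*}
equation (\ref{3}) is rewritten in the form (\ref{5}). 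One then notes that $(X,+)$ is closed under addition (a sum of nonpositive reals is nonpositive), so the characteristic equation (\ref{6}) is precisely the $n$-monomial equation
\begin{equation*}
\Delta_{y}^{n}\rho(x)=(n!)\rho(y)\quad\text{for all }x,y\in X,
\end{equation*}
whose solution set is $\mathcal{M}_{n}(X,Y)$.

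For the sufficiency direction, given $\rho\in\mathcal{M}_{n}(X,Y)$ and $f$ defined by (\ref{9}), we have $f=\rho\circ g$, and Lemma 2.2.2 (part 2) yields $f\in\mathcal{S}_{3}$. For the converse, it suffices to show $\mathcal{S}_{3}\subseteq\mathcal{S}_{3}^{c}$: since $(Y,+)$ is $(n!)$-divisible, the map $y\mapsto H(0,y)=(n!)y$ is injective on $f(U)$, so Lemma 2.2.3 gives $f\in\mathcal{S}_{3}^{c}$, i.e.\ $f=\rho\circ g$ for some $\rho\in\mathcal{M}_{n}(X,Y)$, which is the representation (\ref{9}).

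The only step requiring real care is verifying the correspondence $|\sin u\,\sin^{i}v|=e^{g(u)+i\,g(v)}$ and that the arcsine recovers the intended section value, together with the (minor) fact that $X=(-\infty,0]$ is a subsemigroup of $(\mathbb{R},+)$; once these are in place, the remainder of the argument is a direct transcription of the section-method template used for equation (\ref{1}).
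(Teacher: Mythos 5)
Your proposal is correct and follows essentially the same route as the paper: the same surjection $g(u)=\ln|\sin u|$, section $g'(x)=\arcsin e^{x}$, and functions $G,H$, with Lemma 2.2.2 giving the canonical solutions and Lemma 2.2.3 (via $(n!)$-divisibility of $Y$) giving the converse inclusion. Your explicit verification that $\arcsin|\sin u\sin^{i}v|=g'(g(u)+i\,g(v))$ is a welcome detail the paper leaves implicit.
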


\begin{proof}
We apply\ the section method for 
\begin{equation*}
g:U\rightarrow X\text{, }g\left( u\right) :=\ln \left\vert \sin u\right\vert 
\text{ for all }u\in U\text{,}
\end{equation*}%
its section%
\begin{equation*}
g^{\prime }:X\rightarrow U\text{, }g^{\prime }\left( x\right) :=\arcsin e^{x}%
\text{ for all }x\in X\text{,}
\end{equation*}%
and the functions $G,H$\ defined by%
\begin{equation*}
G:Y^{X}\times X^{2}\rightarrow Y\text{, }G\left( \rho ,\left( x,y\right)
\right) :=\Delta _{y}^{n}\rho \left( x\right) ,
\end{equation*}%
\begin{equation*}
H:Y\times Y\rightarrow Y\text{, \ }H\left( y_{1},y_{2}\right) :=\left(
n!\right) y_{2}.
\end{equation*}%
We note that equation (\ref{3}) becomes equation (\ref{5}) and its characteristic is equation (\ref{6}) or, the monomial equation%
\begin{equation*}
\Delta _{y}^{n}\rho \left( x\right) =\left( n!\right) \rho \left( y\right) 
\text{ for all }x,y\in X.
\end{equation*}

1. If $\rho \in \mathcal{M}_{n}\left( X,Y\right) $\ and $f:U\rightarrow Y$\
is defined by (\ref{9}), then $f=\rho \circ g$\ and - from Lemma 2.2.2 - $f$
is a solution of equation (\ref{3}).

2. Let $f\in \mathcal{S}_{3}$ and $u_{0}\in U$. Since $\left( Y,+\right) $\
is $\left( n!\right) $-divisible, the function%
\begin{equation*}
f\left( U\right) \rightarrow Y\text{, }y\mapsto H\left( f\left( u_{0}\right)
,y\right) =\left( n!\right) y\text{ }
\end{equation*}%
is an injection and, from Lemma 2.2.3, there exists an $n$-monomial $\rho \in 
\mathcal{M}_{n}\left( X,Y\right) $\ such that $f=\rho \circ g$; therefore $f$
satisfies\ relation (\ref{9}).
\end{proof}

Finally we characterize the solutions of equation (\ref{4}).

\begin{thm}
Let $f:U\rightarrow Y$\ be a function. Then $f$\ is a solution of equation (%
\ref{4}) if and only if there exists the monomials $\rho _{i}\in \mathcal{M}%
_{i}\left( X,Y\right) $\ for $i\in \left\{ 0,1,...,n\right\} $\ such that%
\begin{equation}
f\left( u\right) =\sum\limits_{i=0}^{n}\rho _{i}\left( \left( \ln
\left\vert \sin u\right\vert \right) \right) \text{ for all }u\in U\text{.}
\label{10}
\end{equation}
\end{thm}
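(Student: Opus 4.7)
The plan is to mirror the proof of Theorem 3.2 but using the surjection/section pair that was already introduced in the proof of Theorem 4.1. Concretely, I would take
\begin{equation*}
g:U\rightarrow X,\ g(u):=\ln|\sin u|,\qquad g^{\prime}:X\rightarrow U,\ g^{\prime}(x):=\arcsin e^{x},
\end{equation*}
and choose the pair of auxiliary functions exactly as in the radical-Fréchet case:
\begin{equation*}
G(\rho,(x,y)):=\sum_{i=1}^{n+1}(-1)^{i}\tbinom{n+1}{i}\rho(x+iy),\qquad H(y_{1},y_{2}):=-y_{1}.
\end{equation*}
The small identity to verify is that $\arcsin|\sin u\sin^{i}v|=g^{\prime}(g(u)+ig(v))$, which follows from $|\sin u\sin^{i}v|=e^{\ln|\sin u|+i\ln|\sin v|}$; once this is in place equation (\ref{4}) takes the form (\ref{5}) and its characteristic (\ref{6}) is exactly the Fréchet polynomial equation $\Delta_{y}^{n+1}\rho(x)=0$ on $X$, whose solution set is $\mathcal{P}_{n}(X,Y)$.

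For the sufficiency direction, given monomials $\rho_{i}\in\mathcal{M}_{i}(X,Y)$ for $i=0,1,\ldots,n$, I would form $\rho:=\sum_{i=0}^{n}\rho_{i}$, invoke Lemma~2.1 to conclude $\rho\in\mathcal{P}_{n}(X,Y)$, and then apply Lemma~2.2.2 to get $f=\rho\circ g\in\mathcal{S}_{4}$; this yields (\ref{10}).

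For necessity, let $f\in\mathcal{S}_{4}$. It suffices to show $f\in\mathcal{S}_{4}^{c}$, for then $f=\rho\circ g$ for some $\rho\in\mathcal{P}_{n}(X,Y)$, and one more application of Lemma~2.1 decomposes $\rho=\sum_{i=0}^{n}\rho_{i}$ with $\rho_{i}\in\mathcal{M}_{i}(X,Y)$, giving (\ref{10}). To get canonicity, I note that the map $y\mapsto H(y,0)=-y$ on $f(U)$ is injective (it is just negation in the Abelian group $Y$), so Lemma~2.2.3 applies with any choice of $u_{0}\in U$.

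There is no real obstacle: the argument is a verbatim transplant of the proof of Theorem 3.2, with the polynomial surjection $u\mapsto u^{m}$ replaced by the logarithmic-sine surjection $u\mapsto\ln|\sin u|$ set up in Theorem 4.1. The only point that requires a moment's attention is checking the algebraic compatibility of the $\arcsin|\sin u\sin^{i}v|$ terms with $g$ and $g^{\prime}$, and this is immediate from the law of exponents.
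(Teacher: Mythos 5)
Your proposal is correct and follows essentially the same route as the paper: the same section pair $g(u)=\ln|\sin u|$, $g'(x)=\arcsin e^{x}$, the same choices of $G$ and $H$, Lemma 2.2.2 for sufficiency, Lemma 2.2.3 (via injectivity of $y\mapsto -y$) for canonicity, and Lemma 2.1 to decompose the resulting Fr\'{e}chet polynomial into monomials. Your explicit check that $\arcsin\left\vert \sin u\sin ^{i}v\right\vert =g^{\prime }\left( g\left( u\right) +ig\left( v\right) \right) $ is a welcome detail the paper leaves implicit.
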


\begin{proof}
Let%
\begin{equation*}
g:U\rightarrow X\text{, }g\left( u\right) :=\ln \left\vert \sin u\right\vert 
\text{ for all }u\in U\text{,}
\end{equation*}%
\begin{equation*}
g^{\prime }:X\rightarrow U\text{, }g^{\prime }\left( x\right) :=\arcsin e^{x}%
\text{ for all }x\in X\text{,}
\end{equation*}%
\begin{equation*}
G:Y^{X}\times X^{2}\rightarrow Y\text{, }G\left( \rho ,\left( x,y\right)
\right) :=\sum\limits_{i=1}^{n+1}\left( -1\right) ^{i}\tbinom{n+1}{i}\rho
\left( x+iy\right) \text{ and}
\end{equation*}%
\begin{equation*}
H:Y\times Y\rightarrow Y\text{, }H\left( y_{1},y_{2}\right) :=-y_{1}.
\end{equation*}%
We note that equation (\ref{4})\ can be rewritten in the form (\ref{5}) and, consequently, its characteristic is equation (\ref{6}) or, equivalent, the Fr\'{e}chet polynomial equation\ 
\begin{equation*}
\Delta _{y}^{n+1}\rho \left( x\right) =0\text{ for all }x,y\in X.
\end{equation*}%

Then, according to Lemma 2.2.2, $\mathcal{S}_{4}^{c}\subseteq \mathcal{S}_{4}$%
. Moreover, if $f\in \mathcal{S}_{4}$\ and $u_{0}$\ is an arbitrary number
in $U$, the function%
\begin{equation*}
f\left( U\right) \rightarrow Y\text{, }y\mapsto H\left( y,f\left(u_{0}\right)\right) =-y
\end{equation*}%
is bijective; from Lemma 2.2.3 we have $f\in \mathcal{S}_{4}^{c}$. Hence%
\begin{equation*}
\mathcal{S}_{4}=\left\{ \rho \circ g|\text{ }\rho \in \mathcal{P}_{n}\left(
X,Y\right) \right\} ,
\end{equation*}%
and, by Lemma 2.1, 
\begin{equation*}
\mathcal{P}_{n}\left( X,Y\right) =\mathcal{M}_{0}\left( X,Y\right) +\mathcal{%
M}_{1}\left( X,Y\right) +\cdots +\mathcal{M}_{n}\left( X,Y\right) .
\end{equation*}%
Consequently, if $f:U\rightarrow Y$, then $f\in \mathcal{S}_{4}$ if and only
if there exist $\rho _{i}\in \mathcal{M}_{i}\left( X,Y\right) $\ for $i\in
\left\{ 0,1,...,n\right\} $\ such that $f\left( u\right)
=\sum\limits_{i=0}^{n}\rho _{i}\left( \left( \ln \left\vert \sin
u\right\vert \right) \right) $ for all $u\in U$, and the theorem is
completely proved.
\end{proof}

\bigskip 
{\footnotesize
\hspace*{0.5cm}

\begin{minipage}[t]{8cm}$$\begin{array}{l}
\mbox{Dan M. Daianu -- Department of Mathematics,}\\
\mbox{'Politehnica' University of Timi\c soara},\\
 \mbox{P-ta Victoriei 2, 300 006, Timi\c soara, ROMANIA}\\
\mbox{E-mail: dan,daianu.m@gmail.com}\end{array}$$
\end{minipage}}

\end{document}